\newtheorem{theorem}{Theorem}[section]
\newtheorem{proposition}[theorem]{Proposition}
\newtheorem{lemma}[theorem]{Lemma}
\newtheorem{corry}[theorem]{Corollary}
\newtheorem{example}[theorem]{Example}
\newtheorem{rem}[theorem]{Remark}
\numberwithin{claim}{theorem}
\renewenvironment{proof}{\textit{Proof.}}{\hfill\ensuremath{\qed}}
\newcommand{\tr}{\mathrm{tr}}
\def \qed{\hfill{\hbox{$\square$}}}
\numberwithin{equation}{section}
\begin{document}
\title[Biconservative surfaces in  $\mathbb E^n$]{On biconservative surfaces in Euclidean spaces }
\author{R\"uya Ye\u g\.{I}n \c{S}en}
\address{Istanbul Medeniyet University, Faculty of Engineering and Natural Sciences, Department of Mathematics, 34700 Uskudar, Istanbul/Turkey}
\email{ruya.yegin@medeniyet.edu.tr}
\author{Nurett\.{I}n Cenk Turgay}
\address{Istanbul Technical University, Faculty of Science and Letters, Department of Mathematics, 34469 Maslak, Istanbul/Turkey}
\email{turgayn@itu.edu.tr}

\begin{abstract}
In this paper, we study biconservative surfaces with 
parallel normalized mean curvature vector in $\mathbb{E}^4$.
We obtain complete local classification in $\mathbb{E}^4$ 
for a biconservative PNMCV surface. We also give an example 
to show the existence of PNMCV biconservative surfaces 
in $\mathbb{E}^4$. 
 \end{abstract}

\subjclass[2010]{53C42}
\keywords{Biconservative surfaces, parallel normalized mean curvature vectors}

\maketitle
\section{Introduction}
Let $(M^m,g)$ and $(N^n,\tilde g)$ be some Riemannian manifolds. Then,  the bi-energy functional is defined by 
\begin{equation}\nonumber
E_2(\psi)=\frac{1}{2}\int_{M}|\tau(\psi)|^2v_g\quad 
\end{equation}
{ whenever $\psi:M\rightarrow N$   is a smooth mapping,} where $\tau(\psi)$ denote the tension field of $\psi $.

A mapping $\psi:M\rightarrow N$  is said to be biharmonic if it is a critical point of $E_2$. 
In \cite{Ji2} it was proved that  mapping  $\psi$ is biharmonic if and only if it satisfies the Euler-Lagrange equation associated with this bi-energy functional given by
\begin{equation}\label{EulerLagrangeEq}
\tau_2(\psi)=0,
\end{equation} 
where  $\tau_2$ is the bitension field defined by $\tau_2(\psi)=\Delta\tau(\psi)-tr\tilde R(d\psi,\tau(\psi))d\psi$ (See also \cite{Ji}).

In particular, if $\psi$ is an isometric immersion, then $M$ is said to be a biharmonic submanifold of $N$.
In  this case,  by considering tangential and normal components of $\tau_2(\psi)$, one can obtain the following proposition.
\begin{proposition}\label{biharmonic}
Let $x:M^m\rightarrow N^n$ be an isometric immersion 
between two Riemannian manifolds. Then, $x$ is biharmonic 
if and only if the equations
\begin{equation}\label{tangent component}
m \mathrm{grad} \left\|  H\right\| ^2+ 4 \mathrm{trace}   A_{\nabla^\perp_{\cdot}H} (\cdot)+4\mathrm{trace} (\tilde{R}(\cdot,H)\cdot)^T=0
\end{equation}
and 
\begin{equation}\label{normal component}
\mathrm{trace} \alpha_\psi(A_H(\cdot),\cdot)-\Delta^\perp H+\mathrm{trace} (\tilde{R}(\cdot,H)\cdot)^\perp=0
\end{equation}
are satisfied, where $A$, $H$ and $\alpha_\psi$ denote the shape operator, the mean 
curvature vector and second fundamental form of $\psi$,  
$\nabla^\perp$ is the normal connection of $M$ and  $\Delta^\perp$ is the Laplacian associated with $\nabla^\perp$.
\end{proposition}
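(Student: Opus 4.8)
The plan is to substitute the standard identity $\tau(x)=mH$ into the bitension field and then read off the tangential and normal components of the equation $\tau_2(x)=0$. For an isometric immersion the tension field is the trace of the second fundamental form, $\tau(x)=\mathrm{trace}\,\alpha_x=mH$, so \eqref{EulerLagrangeEq} becomes $\Delta(mH)-\mathrm{trace}\,\tilde R(dx,mH)dx=0$, where $\Delta$ is the rough Laplacian associated with the connection pulled back along $x$. Since \eqref{tangent component} and \eqref{normal component} are tensorial identities it suffices to verify the equivalence at an arbitrary point $p\in M$; there I would fix a local orthonormal frame $\{e_1,\dots,e_m\}$ of $TM$ that is geodesic at $p$, so that at $p$ one has simply $\Delta(mH)=m\sum_i\tilde\nabla_{e_i}\tilde\nabla_{e_i}H$, the overall sign being that of the convention fixed for $\tau_2$.

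The heart of the argument is the repeated use of the Gauss and Weingarten formulae. By Weingarten, $\tilde\nabla_{e_i}H=-A_H e_i+\nabla^\perp_{e_i}H$; applying $\tilde\nabla_{e_i}$ once more, I would use the Gauss formula $\tilde\nabla_{e_i}X=\nabla_{e_i}X+\alpha_x(e_i,X)$ on the tangential summand $-A_H e_i$ and the Weingarten formula again on the normal summand $\nabla^\perp_{e_i}H$. Summing over $i$ at $p$ and separating tangential from normal parts, the normal component of $m\sum_i\tilde\nabla_{e_i}\tilde\nabla_{e_i}H$ collects into a multiple of $\Delta^\perp H+\mathrm{trace}\,\alpha_x(A_H(\cdot),\cdot)$, whereas the tangential component produces a multiple of $\mathrm{trace}\,A_{\nabla^\perp_\cdot H}(\cdot)$ together with the term $m\sum_i(\nabla_{e_i}A)_H e_i$. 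The latter I would handle with the Codazzi equation and the elementary identity $\mathrm{trace}\big((\nabla_{e_j}A)_H\big)=\tfrac m2\,e_j\|H\|^2$ — which follows at once from $\mathrm{trace}\,A_H=m\|H\|^2$ and $\mathrm{trace}\,A_{\nabla^\perp_{e_j}H}=\tfrac m2\,e_j\|H\|^2$ — turning it into a multiple of $\mathrm{grad}\|H\|^2$ plus a curvature contribution. The remaining ambient term is $\mathrm{trace}\,\tilde R(dx,mH)dx=m\sum_i\tilde R(e_i,H)e_i$, whose tangential and normal parts are $m\,\mathrm{trace}(\tilde R(\cdot,H)\cdot)^T$ and $m\,\mathrm{trace}(\tilde R(\cdot,H)\cdot)^\perp$.

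Collecting the tangential and normal components of $\tau_2(x)$ and normalizing the coefficients, $\tau_2(x)=0$ becomes equivalent to \eqref{tangent component} together with \eqref{normal component}; the curvature term in \eqref{tangent component} arises by combining the ambient curvature term of $\tau_2(x)$ with the curvature contribution produced when Codazzi is used to rewrite $\sum_i(\nabla_{e_i}A)_H e_i$. For the converse it is enough to reassemble the tangential and normal parts: if \eqref{tangent component} and \eqref{normal component} both hold then $\tau_2(x)=0$, so $x$ is biharmonic. I expect the main obstacle to be the careful bookkeeping in the second covariant derivative $\tilde\nabla_{e_i}\tilde\nabla_{e_i}H$ — in particular extracting the $\mathrm{grad}\|H\|^2$ term cleanly out of $\sum_i(\nabla_{e_i}A)_H e_i$ via Codazzi, and keeping every sign consistent with the conventions adopted for $\Delta$, $\Delta^\perp$ and $\tilde R$; working in a frame geodesic at $p$ is what keeps the computation manageable.
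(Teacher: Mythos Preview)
Your computation is the standard derivation and is correct in outline: substitute $\tau(x)=mH$, expand $\tilde\nabla_{e_i}\tilde\nabla_{e_i}H$ via Gauss--Weingarten in a geodesic frame, and split into tangential and normal parts, using Codazzi together with the trace identity $\mathrm{trace}\big((\nabla_{e_j}A)_H\big)=\tfrac{m}{2}\,e_j\|H\|^2$ to extract the $\mathrm{grad}\|H\|^2$ term and the ambient-curvature correction in the tangential component.

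The paper, however, does not actually prove this proposition. It is stated in the Introduction as a well-known structural fact, with only the one-line justification ``by considering tangential and normal components of $\tau_2(\psi)$''; no computation is carried out, and the result is implicitly attributed to the literature (in particular to Jiang's variational formulae and the survey \cite{CMOP}). So there is nothing to compare your argument against: you have supplied the detailed calculation that the paper deliberately omits. Your only real risk, as you yourself flag, is sign and convention bookkeeping---the paper fixes $\tau_2(\psi)=\Delta\tau(\psi)-\mathrm{tr}\,\tilde R(d\psi,\tau(\psi))d\psi$, and the coefficients $m$ and $4$ in \eqref{tangent component} depend on this choice, so make sure your normalization at the end matches it.
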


From Proposition \ref{biharmonic}, one can see that an isometric immersion $x:M\rightarrow N$ is biharmonic if its mean curvature $H$ vanishes identically.   In \cite{ChenOpenProblems},  Bang-Yen Chen conjectured that the converse of this statement is also true if the ambient space is Euclidean. Chen's biharmonic conjecture has been verified in a lot of particular cases so far (see for example \cite{ChenRapor,ChenMunt1998Bih,Dimitric1992,Hasanis-Vlachos,YuFu2014ThreeD,MaetaGlobal}). However, the conjecture is still open.

On the other hand, a mapping $\psi:M\rightarrow N$ satisfying the condition
\begin{equation}\label{BiconsEq}
\langle \tau_2(\psi), d\psi\rangle=0,
\end{equation} 
that is weaker than \eqref{EulerLagrangeEq}, is said to be biconservative. In particular, if $\psi=x $ is an isometric immersion, then   \eqref{BiconsEq} is equivalent to
\begin{equation}\nonumber
\tau_2(x)^T=0,
\end{equation}
where $\tau_2(x)^T$ denotes the tangential part of  $\tau_2(x)$. In this case,  $M$ is said to be a biconservative submanifold of $N$. Before we proceed, we would like to note that one can conclude the following well-known proposition,  by considering Proposition \ref{biharmonic}(See for example \cite{CMOP}).

\begin{proposition}
Let  $x:M^m\rightarrow N^n$ be an isometric immersion between two Riemannian manifolds. Then,  $x$ is biconservative if and only if the equation \eqref{tangent component} is satisfied.
\end{proposition}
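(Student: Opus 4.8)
The plan is to obtain the statement as an immediate consequence of Proposition \ref{biharmonic} by isolating the tangential part of the bitension field. First I would note that, for an isometric immersion $x:M^m\rightarrow N^n$, the differential $dx$ identifies each tangent space $T_pM$ with a subspace of $T_{x(p)}N$, so that $\langle \tau_2(x), dx(X)\rangle = \langle \tau_2(x)^T, X\rangle$ for every $X\in\Gamma(TM)$. Hence the biconservativity condition \eqref{BiconsEq}, i.e.\ $\langle \tau_2(x), dx(X)\rangle=0$ for all $X$, is equivalent to $\tau_2(x)^T=0$, as already recorded in the text preceding the statement.

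Next I would recall the decomposition of $\tau_2(x)$ underlying Proposition \ref{biharmonic}. Since $x$ is an isometric immersion, $\tau(x)=mH$, so $\tau_2(x)=\Delta(mH)-m\,\mathrm{trace}\,\tilde R(dx(\cdot),H)dx(\cdot)$, where $\Delta$ is the (rough) Laplacian acting on the $N$-valued field $H$ along $x$. Expanding $\Delta(mH)$ with the Gauss and Weingarten formulas — writing $\tilde\nabla_{e_i}H=-A_He_i+\nabla^\perp_{e_i}H$ for a local orthonormal frame $\{e_i\}$ of $M$ and differentiating once more — and then projecting onto the directions tangent and normal to $M$, one finds that the tangential part of $\tau_2(x)$ is a fixed nonzero multiple of the left-hand side of \eqref{tangent component}, while the normal part is the left-hand side of \eqref{normal component}; this is exactly what Proposition \ref{biharmonic} asserts. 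Combining this with the first step, $x$ is biconservative if and only if $\tau_2(x)^T=0$, i.e.\ if and only if \eqref{tangent component} holds.

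The only place that genuinely needs care — and the step I would spell out — is the bookkeeping in the tangential projection of $\Delta(mH)$: one uses the symmetry of the shape operator, the Codazzi equation to handle the term $\mathrm{trace}\,(\nabla^\perp_\cdot\alpha_\psi)(\cdot)$, and the identity $\sum_i\langle A_He_i,\nabla^\perp_{e_i}H\rangle$-type manipulations to recognize the term $m\,\mathrm{grad}\|H\|^2$. However, all of this computation is already carried out in the derivation of Proposition \ref{biharmonic}, so no new work is required here; I would simply present the proposition as following directly from that result together with the equivalence $\langle\tau_2(x),dx\rangle=0 \Leftrightarrow \tau_2(x)^T=0$.
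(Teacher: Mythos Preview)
Your proposal is correct and matches the paper's approach: the paper does not give a separate proof but simply notes that the proposition follows by considering Proposition \ref{biharmonic} (with a reference to \cite{CMOP}), which is exactly the reduction you describe via $\langle\tau_2(x),dx\rangle=0\Leftrightarrow\tau_2(x)^T=0$.
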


In order to understand geometry  of biharmonic submanifolds, biconservative  immersions have been studied in many papers so far, For example,  biconservative immersions into pseudo-Euclidean spaces were studied in \cite{YuFu2014Lor3S,YFuTurgay,MOR20141, TurgayHHypers,UpadhyayTurgay}. On the other hand, in \cite{MOR2016BicSurf}, Montaldo  et al. study biconservative surfaces in four-dimensional space form with constant mean curvature. In \cite{FOLSnRHnR}, the complete classification of surfaces in product spaces $\mathbb S^n\times\mathbb R$ and
$\mathbb H^n\times\mathbb R$ with parallel mean curvature vector was obtained by D. Fetcu, C. Oniciuc, and A. L. Pinheiro.

In this paper, we consider biconservative surfaces in Euclidean spaces. In Sect. 2, after we describe the notation that we will use, we give basic facts on  biconservative submanifolds. In Sect. 3, we give complete classification of biconservative surfaces in Euclidean spaces with paralel normalized mean curvature vector field.

\section{Biconservative submanifolds in Euclidean spaces}
Let $\mathbb E^n$ denote the Euclidean $n$-space with the canonical positive definite Euclidean metric tensor given by  
$$
\widetilde g=\langle\ ,\ \rangle=\sum\limits_{i=1}^n dx_i^2,
$$
where $(x_1, x_2, \hdots, x_n)$  is a rectangular coordinate system in $\mathbb E^n$ and $\tilde\nabla$ stands for its Levi-Civita connection.

Let $M$ be an $m$-dimensional submanifold of $\mathbb E^n$ and $\nabla^\perp$ denote its normal connection. A normal vector field $\xi$ is called parallel if
\begin{equation}\nonumber
\nabla^\perp_X \xi=0
\end{equation}
whenever $X$ is tangent to $M$. On the other hand, the mean curvature vector field $H$ of $M$ is defined by 
\begin{equation}\label{mean curvature}
H=\frac{1}{2}\tr h(\cdot,\cdot),
\end{equation}
where $h$ is the second fundamental form of $M$. Assume that the mean curvature (function) of $M$ given by
\begin{equation}\nonumber
f=\langle H,H\rangle^{1/2}
\end{equation}
is a non-vanishing function. In this case, if the unit normal vector field along the mean curvature vector field $H$ of $M$ is parallel, then $M$ is said to have parallel normalized mean curvature vector field and called a PNMCV submanifold of $\mathbb E^n$. It is obvious that  PNMCV submanifolds generalize non-minimal submanifolds with parallel mean curvature vector and a PNMCV submanifolds has parallel mean curvature vector if and only if it has constant mean curvature, i.e., $f$ is constant. It is possible 
to find examples of PNMCV submanifolds with non-constant mean curvature (See for example \cite{Chen1980,Leung}).

Since the curvature tensor $\tilde R$ of $\mathbb E^n$ vanishes identically, the following proposition is obtained immediately from \eqref{tangent component}.
\begin{proposition}\label{BicPNMCEnProp}
Let $M$ be an $m$-dimensional PNMCV submanifold of the Euclidean space $\mathbb E^n$. Then, $M$ is biconservative if and only if
\begin{equation}\label{BicPNMCEn}
A_{m+1} (\mathrm{grad}f)=-\frac{mf}2\mathrm{grad}f,
\end{equation}
where $e_{m+1}$ is the parallel mean curvature vector field and $A_{m+1}$ is the shape operator along $e_{m+1}$.
\end{proposition}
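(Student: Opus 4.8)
The plan is to specialize the characterization of biconservativity in terms of equation~\eqref{tangent component} to the present situation and then simplify each term using the PNMCV hypothesis together with the flatness $\tilde R\equiv 0$. Since the ambient space $\mathbb{E}^n$ is flat, the term $\mathrm{trace}(\tilde R(\cdot,H)\cdot)^T$ in \eqref{tangent component} vanishes identically, so $M$ is biconservative if and only if
\[
m\,\mathrm{grad}\,\|H\|^2+4\,\mathrm{trace}\,A_{\nabla^\perp_{\cdot}H}(\cdot)=0.
\]

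Next I would rewrite the two surviving terms. Writing $H=fe_{m+1}$, where $e_{m+1}=H/f$ is the parallel unit normal along $H$, we have $\|H\|^2=f^2$, hence $m\,\mathrm{grad}\,\|H\|^2=2mf\,\mathrm{grad}\,f$. For the trace term, the PNMCV condition gives $\nabla^\perp_X e_{m+1}=0$ for every $X$ tangent to $M$, so $\nabla^\perp_X H=(Xf)e_{m+1}$. Picking a local orthonormal tangent frame $\{e_1,\dots,e_m\}$ and using that the shape operator is tensorial in its normal argument, i.e.\ $A_{gE}=gA_E$ for any smooth function $g$, one obtains
\[
\mathrm{trace}\,A_{\nabla^\perp_{\cdot}H}(\cdot)=\sum_{i=1}^{m}A_{(e_if)e_{m+1}}(e_i)=A_{m+1}\Bigl(\sum_{i=1}^{m}(e_if)e_i\Bigr)=A_{m+1}(\mathrm{grad}\,f).
\]

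Substituting these two expressions into the displayed equation yields $2mf\,\mathrm{grad}\,f+4A_{m+1}(\mathrm{grad}\,f)=0$, which is exactly \eqref{BicPNMCEn}; since every step is an equivalence, running the argument backwards establishes the converse as well. I do not expect any genuine obstacle in this proof: the only points needing a little care are the vanishing of the curvature term (immediate in $\mathbb{E}^n$), the bookkeeping of the normalization $H=fe_{m+1}$ so that $\|H\|^2=f^2$, and the linearity of $A$ in its normal slot used to pull the scalars $e_if$ out of the trace.
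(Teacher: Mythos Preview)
Your argument is correct and is exactly the computation the paper has in mind: the paper itself does not write out a proof but simply remarks that the proposition ``is obtained immediately from \eqref{tangent component}'' since $\tilde R\equiv 0$ in $\mathbb E^n$. Your derivation makes this immediate step explicit, and every simplification (the vanishing curvature term, $\mathrm{grad}\,\|H\|^2=2f\,\mathrm{grad}\,f$, and $\nabla^\perp_X H=(Xf)e_{m+1}$) is handled correctly.
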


\begin{rem}
If the mean curvature vector of $M$ is parallel, then \eqref{BicPNMCEn} is satisfied trivially. Therefore, after this point we will assume that $\mathrm{grad}f$ does not vanish at any point of $M$.
\end{rem}

%%%%%%%%%%%  %% %  % % % % %% % % %%%%%%%% %%% % % %% % % % % %  %% %%%%%%%%%%%%  %% %  % % % % %% % % %%%%%%%% %%% % % %% % % % % %  %% %%%%%%%%%%%%  %% %  % % % % %% % % %%%%%%%% %%% % % %% % % % % %  %% %%%%%%%%%%%%  %% %  % % % % %% % % %%%%%%%% %%% % % %% % % % % %  %% %%%%%%%%%%%%  %% %  % % % % %% % % %%%%%%%% %%% % % %% % % % % %  %% %%%%%%%%%%%%  %% %  % % % % %% % % %%%%%%%% %%% % % %% % % % % %  %% %%%%%%%%%%%%  %% %  % % % % %% % % %%%%%%%% %%% % % %% % % % % %  %% %%%%%%%%%%%%  %% %  % % % % %% % % %%%%%%%% %%% % % %% % % % % %  %% %%%%%%%%%%%%  %% %  % % % % %% % % %%%%%%%% %%% % % %% % % % % %  %% %%%%%%%%%%%%  %% %  % % % % %% % % %%%%%%%% %%% % % %% % % % % %  %% %%%%%%%%%%%%  %% %  % % % % %% % % %%%%%%%% %%% % % %% % % % % %  %% %%%%%%%%%%%%  %% %  % % % % %% % % %%%%%%%% %%% % % %% % % % % %  %% %%%%%%%%%%%%  %% %  % % % % %% % % %%%%%%%% %%% % % %% % % % % %  %% %%%%%%%%%%%%  %% %  % % % % %% % % %%%%%%%% %%% % % %% % % % % %  %% %%%%%%%%%%%%  %% %  % % !
 % % %% % % %%%%%%%% %%% % % %% % % % % %  %% %

\subsection{Basic equations in the theory of surfaces of $\mathbf {E^n}$}
Let $M$ be a surface in $\mathbb E^n$, $\nabla, h$ and  $A$ denote its the Levi-Civita connection, second fundamental form and shape operator, respectively. Note that $R$ and $\tilde R$ will stand for curvature tensor of $M$ and $\mathbb E^n$, respectively.

For tangent vector fields $X,Y,Z$ on $M$ the Codazzi equation $(\tilde R(X,Y)Z)^\perp=0$ and the Gauss equation $(\tilde R(X,Y)Z)^T=0$ take the form
\begin{equation}\label{Codazzi}
(\bar{\nabla}_X h)(Y,Z)=(\bar{\nabla}_Y h)(X,Z)
\end{equation}
and
\begin{equation}\label{TheGausseq}
R(X,Y)Z=A_{h(Y,Z)}X-A_{h(X,Z)}Y,
\end{equation}
respectively,  where $(\bar{\nabla}_X h)(Y,Z)$ is defined by 
$$(\bar{\nabla}_X h)(Y,Z)=\nabla^\perp_{X}h(Y,Z)-h(\nabla_{X}Y,Z)-h(Y,\nabla_{X}Z).$$

The Ricci equation  $(\tilde R(X,Y)\xi)^\perp=0$ takes the form
\begin{equation}\label{ricci}
R^D(X,Y)\xi=h(X,A_\xi Y)-h(A_\xi X,Y)
\end{equation}
whenever $X,Y$ are tangent and $\xi$ is normal to $M$.

\section{Biconservative PNMCV Surfaces in $\mathbb{E}^4$}
In this section, we would like to obtain complete local classification of biconservative surfaces with parallel normalized 
mean curvature vector in the Euclidean 4-space $\mathbb{E}^4$.

First we would like to obtain shape operator and Levi-Civita connection of a biconservative PNMCV surface in $\mathbb{E}^4$. 
\begin{lemma}\label {Biconservative surfaces T.1.}
Let $M$ be a surface in $\mathbb{E}^4$ with non-vanishing mean curvature. Then, $M$ is a biconservative PNMCV surface if and only if there exists a local orthonormal frame field $\left\lbrace e_1,e_2;e_3,e_4\right\rbrace$ 
such that 
\begin{itemize}
\item[(1)] The Levi-Civita connection $\nabla$ and normal connection $\nabla^\perp$ of $M$ satisfy
\begin{subequations}\label{connectionsAll}
\begin{eqnarray}
\label{connectionsa}\nabla_{e_1}e_1&=&\nabla_{e_1}e_2=0,\\
\label{connectionsb}\nabla_{e_2}e_1&=&\frac{-3e_1(f)}{4f}e_2,\quad \nabla_{e_2}e_2=\frac{3e_1(f)}{4f}e_1,\\
\label{connectionsc}\nabla^\perp{e_3}&=&\nabla^\perp{e_4}=0,
\end{eqnarray}
\end{subequations} 

\item[(2)] Shape operators along $e_3$ and $e_4$ have 
matrix representations given by 
\begin{equation}\label{shape operator}
A_{e_3}=\left(\begin{array}{ccc}
-f&0\\
0&3f\\
\end{array}\right),\qquad 
A_{e_4}=\left(\begin{array}{ccc}
cf^{3/2}&0\\
0&- cf^{3/2}\\
\end{array}\right)
\end{equation}
for some constant $c$ and a smooth non-vanishing function $f$ such that $e_2(f)=0$ and  $e_1(f)\neq0$.
\end{itemize}
\end{lemma}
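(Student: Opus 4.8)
The plan is to prove the two implications separately; the converse is a direct verification, while the forward direction carries the real content and rests on the Codazzi and Ricci equations together with Proposition \ref{BicPNMCEnProp}.

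For the ``if'' part, assume a frame as in the statement exists. From \eqref{shape operator} and \eqref{mean curvature} one gets $H=\tfrac12\big(h(e_1,e_1)+h(e_2,e_2)\big)=\tfrac12\big((-f+3f)e_3+(cf^{3/2}-cf^{3/2})e_4\big)=f e_3$, so, up to sign, $e_3$ is the unit normal along $H$, and since $\nabla^\perp e_3=0$ by \eqref{connectionsc}, $M$ is a PNMCV surface. Because $e_2(f)=0$ we have $\mathrm{grad}\,f=e_1(f)e_1$, hence $A_{e_3}(\mathrm{grad}\,f)=e_1(f)A_{e_3}e_1=-f\,e_1(f)e_1=-f\,\mathrm{grad}\,f$, which is exactly \eqref{BicPNMCEn} for $m=2$; thus $M$ is biconservative by Proposition \ref{BicPNMCEnProp}.

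For the ``only if'' part, suppose $M$ is a biconservative PNMCV surface and set $f=\langle H,H\rangle^{1/2}>0$. By definition $e_3:=H/f$ is a parallel unit normal, and differentiating $\langle e_3,e_4\rangle=0$ and $\langle e_4,e_4\rangle=1$ shows that $e_4$ is parallel as well; this is \eqref{connectionsc}, and in particular the normal bundle is flat. By Proposition \ref{BicPNMCEnProp} (with $m=2$) and the standing assumption $\mathrm{grad}\,f\ne0$, the unit vector $e_1:=\mathrm{grad}\,f/\|\mathrm{grad}\,f\|$ satisfies $A_{e_3}e_1=-f e_1$; take $e_2$ to be a unit vector tangent to $M$ and orthogonal to $e_1$, so $e_1(f)=\|\mathrm{grad}\,f\|\ne0$ and $e_2(f)=0$. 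Since $A_{e_3}$ is symmetric, $A_{e_3}e_2=\mu e_2$, and the identities $\tfrac12\,\mathrm{tr}\,A_{e_3}=\langle H,e_3\rangle=f$ and $\tfrac12\,\mathrm{tr}\,A_{e_4}=\langle H,e_4\rangle=0$ force $\mu=3f$ and $\mathrm{tr}\,A_{e_4}=0$; hence in the frame $\{e_1,e_2\}$ one has $A_{e_3}=\mathrm{diag}(-f,3f)$ and $A_{e_4}=\begin{pmatrix}a&b\\ b&-a\end{pmatrix}$ for some functions $a,b$.

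To finish, I would invoke the curvature equations. Flatness of the normal bundle makes the left side of the Ricci equation \eqref{ricci} vanish, so $h(e_1,A_{e_3}e_2)=h(A_{e_3}e_1,e_2)$, i.e. $3f\,h(e_1,e_2)=-f\,h(e_1,e_2)$; hence $h(e_1,e_2)=0$ and $b=0$. Writing $\nabla_{e_1}e_1=\omega_1 e_2$ and $\nabla_{e_2}e_1=\omega_2 e_2$ and substituting the shape operators into the Codazzi equation \eqref{Codazzi} (using $\nabla^\perp e_3=\nabla^\perp e_4=0$ and $e_2(f)=0$), comparison of $e_3$-components in $(\bar\nabla_{e_1}h)(e_2,e_1)=(\bar\nabla_{e_2}h)(e_1,e_1)$ yields $\omega_1 f=0$, so $\omega_1=0$, which is \eqref{connectionsa}, while the $e_4$-components give $e_2(a)=0$; comparison of $e_3$-components in $(\bar\nabla_{e_1}h)(e_2,e_2)=(\bar\nabla_{e_2}h)(e_1,e_2)$ yields $3e_1(f)=-4\omega_2 f$, i.e. \eqref{connectionsb}, and the $e_4$-components give $e_1(a)=\tfrac{3e_1(f)}{2f}a$. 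Setting $g:=af^{-3/2}$, the relations $e_2(a)=e_2(f)=0$ and $e_1(a)=\tfrac{3e_1(f)}{2f}a$ give $e_1(g)=e_2(g)=0$, so $g$ is locally constant, say $g=c$, which puts $A_{e_4}$ into the form in \eqref{shape operator}. The main obstacle is the careful bookkeeping in these two Codazzi identities — keeping the $e_3$- and $e_4$-parts of each $(\bar\nabla\,h)$-term straight — and then correctly promoting the differential relations on $a$ to the global identity $a=cf^{3/2}$, which the substitution $g=af^{-3/2}$ handles cleanly.
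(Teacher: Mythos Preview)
Your proof is correct and follows essentially the same route as the paper's: for the forward direction you pick $e_3=H/f$, use the PNMCV condition to get flat normal bundle, use Proposition~\ref{BicPNMCEnProp} to identify $e_1$ as an eigenvector of $A_{e_3}$ with eigenvalue $-f$, read off the rest of $A_{e_3}$ and the tracelessness of $A_{e_4}$ from the mean-curvature identities, kill the off-diagonal of $A_{e_4}$ via the Ricci equation, and then extract the connection coefficients and the relation $a=cf^{3/2}$ from the two Codazzi identities. The only cosmetic difference is that the paper integrates $2f\,e_1(\lambda)=3\lambda\,e_1(f)$ directly, whereas you pass through the auxiliary function $g=af^{-3/2}$; these are equivalent.
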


\begin{proof}
Let the mean curvature vector of $M$ be $H$ and $\langle H,H\rangle=f$. In order to prove the necessary condition, we assume that  $M$ is biconservative and  $H$ is parallel.   We choose a local frame field $\{e_3,e_4\}$ of the normal bundle of $M$ as $e_3=\frac{H}{f}$. Note that \eqref{BicPNMCEn} is satisfied for $m=2$. On the other hand, by the assumption $e_3$ is parallel. $e_4$ is also parallel because the co-dimension of $M$ is 2. Therefore, we have 
\begin{equation}\nonumber
\nabla^\perp_X{e_3}=\nabla^\perp_X{e_4}=0
\end{equation} 
for any tangent vector field $X$ which yields \eqref{connectionsc}. As $H$ is proportional to $e_3$, we have 
\begin{equation}\label{EqutrA3trA4}
\frac{1}{2}\tr A_3=f\quad\mbox{and}\quad \tr A_4=0.
\end{equation}

We choose a local frame field $\{e_1,e_2\}$ of the normal bundle of $M$ so that 
$A_3=\mathrm{diag}(h^3_ {11},h^3_ {22}).$ Then, because of \eqref{BicPNMCEn}, we may assume $e_1= \nabla f/\left\| \nabla f\right\| $ and $h^{3}_{11}=-f$. We will prove that the frame field  $\{e_1,e_2;e_3,e_4\}$ satisfies the other  conditions given in the lemma.

The first equation in \eqref{EqutrA3trA4} implies   $h^{3}_{22}=3f$. Since $e_3$ is parallel, the Ricci equation \eqref{ricci} for $X=e_1, Y=e_2$ and $\xi=e_3$ yields 
$$4fh(e_1,e_2)=0.$$
Therefore, we have $\langle A_4(e_1),e_2\rangle=0$. This equation and the second equation in \eqref{EqutrA3trA4} imply
$$
A_{e_3}=\left(\begin{array}{ccc}
-f&0\\
0&3f\\
\end{array}\right),\quad\mbox{and}\quad 
A_{e_4}=\left(\begin{array}{ccc}
\lambda&0\\
0&-\lambda\\
\end{array}\right)
$$
for a smooth function $\lambda$. 

By using Codazzi equation \eqref{Codazzi} for $X=e_1$, $Y=Z=e_2$, we obtain
\begin{equation}\label{3.1}
e_1(\lambda)=-2\lambda\omega_{12}(e_2)\quad \text{and}\quad e_1(f)=\frac{-4f}{3}\omega_{12}(e_2)
\end{equation}
which imply
\begin{equation}\label{E.1}
2fe_1(\lambda)=3\lambda e_1(f).
\end{equation}
The Codazzi equation \eqref{Codazzi} for $X=e_2$, $Y=Z=e_1$ gives
\begin{equation}\label{3.1b}
\omega_{12}(e_1)=0, \quad e_2(\lambda)=0.
\end{equation}
By integrating equation \eqref{E.1} and considering \eqref{3.1b}, we have $\lambda=cf^{3/2}$ for some constant $c$. Moreover,  since $e_1$ is proportional 
to $\nabla f$, $f$ satisfies $e_2(f)=0$ and  $e_1(f)\neq0$. Therefore, we have the condition (1) of the lemma. On the other hand, the second equation in \eqref{3.1} and the first equation in \eqref{3.1b} give the condition (2) of the lemma. Hence, we completed the proof of the necessary condition.

Conversely, let $M$ be a surface with a local orthonormal frame field $\{e_1,e_2;e_3,e_4\}$ satisfying the conditions given in the Lemma. Then, \eqref{shape operator} implies $H=fe_3$ and $\mathrm{grad} f=e_1(f)e_1+e_2(f)e_2=e_1(f)e_1$. Therefore, $\mathrm{grad} f$ is eigenvalue of the shape operator $A_{e_3}$ and $e_3$ is the normalized mean curvature vector field of $M$. Moreover, \eqref{connectionsc} yields that $e_3$ is parallel. Therefore, $M$ is a PNMCV surface. Moreover, Proposition \ref{BicPNMCEnProp} is satisfied which yields that $M$is biconservative. Hence, the proof of the sufficient condition is completed.
\end{proof}
% % % %% % % %% % % %% % % %% % % %% % % %% % % %% % % %% % % %% % % %% % % %% % % %% % % %% % % %% % % %% % % %% % % %% % % %% % % %% % % %% % % %% % % %% % % %% % % %% % % %% % % %% % % %% % % %% % % %% % % %% % % %% % % %% % % %% % % %% % % %% % % %% % % %% % % %% % % %% % % %% % % %% % % %% % % %% % % %% % % %% % % %% % % %% % % %% % % %% % % %% % % %% % % %% % % %% % % %% % % %% % % %% % % %% % % %% % % %% % % %% % % %% % % %% % % %% % % %% % % %% % % %% % % %% % % %% % % %% % % %% % % %% % % %% % % %% % % %% % % %% % % %% % % %% % % %% % % %% % % %% % % %% % % %% % % %% % % %% % % %% % % %% % % %% % % %% % % %% % % %% % % %% % % %% % % %% % % %% % % %% % % %% % % %% % % %% % % %% % % %% % % %% % % %% % % %% % % %% % % %% % % %% % % %% % % %% % % %% % % %% % % %% % % %% % % %% % % %% % % %% % % %% % % %% % % %% % % %

As an immediate consequence of Lemma \ref{Biconservative surfaces T.1.}, we would like to state the following corollary. 
\begin{corry}
Let $M$ be a biconservative PNMCV surface in $\mathbb{E}^4$. Then, the Gaussian curvature $K$ and the mean curvature $f$ of $M$ satisfy $K=-3f^2-c^2f^3 $ for a constant  $c$.
\end{corry}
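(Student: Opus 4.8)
The plan is to deduce the identity directly from Lemma~\ref{Biconservative surfaces T.1.} together with the Gauss equation \eqref{TheGausseq}; the corollary carries no content beyond evaluating that equation in the adapted frame. First I would invoke Lemma~\ref{Biconservative surfaces T.1.} to obtain a local orthonormal frame $\{e_1,e_2;e_3,e_4\}$ in which the shape operators are the diagonal matrices displayed in \eqref{shape operator}. Writing $h^\alpha_{ij}=\langle h(e_i,e_j),e_\alpha\rangle=\langle A_{e_\alpha}e_i,e_j\rangle$, this translates into
\begin{equation}\nonumber
h(e_1,e_1)=-f\,e_3+cf^{3/2}e_4,\qquad h(e_2,e_2)=3f\,e_3-cf^{3/2}e_4,\qquad h(e_1,e_2)=0,
\end{equation}
for the constant $c$ and the function $f$ furnished by the lemma.

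Next I would recover the Gaussian curvature from \eqref{TheGausseq}. Setting $X=e_1$ and $Y=Z=e_2$ there and pairing with $e_1$, while using the defining relation $\langle A_\xi X,Y\rangle=\langle h(X,Y),\xi\rangle$, one arrives at the classical expression
\begin{equation}\nonumber
K=\langle R(e_1,e_2)e_2,e_1\rangle=\langle h(e_1,e_1),h(e_2,e_2)\rangle-\|h(e_1,e_2)\|^2 .
\end{equation}

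Finally I would substitute the three values of the second fundamental form found above and use that $\{e_3,e_4\}$ is orthonormal; since $h(e_1,e_2)=0$ there is no off-diagonal contribution, and one is left with $K=(-f)(3f)+(cf^{3/2})(-cf^{3/2})=-3f^2-c^2f^3$, which is precisely the asserted relation. I do not expect any genuine obstacle here: the content is entirely contained in Lemma~\ref{Biconservative surfaces T.1.}, and the only points that merit a moment's care are fixing the sign convention in \eqref{TheGausseq} so that $K$ comes out correctly, and noting that it is exactly the parallelism of the normalized mean curvature vector (hence the existence of the adapted frame of the lemma) that makes $h(e_1,e_2)$ vanish and so eliminates the cross term.
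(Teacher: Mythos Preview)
Your argument is correct and is exactly the intended one: the paper states the corollary without proof as an immediate consequence of Lemma~\ref{Biconservative surfaces T.1.}, and the only thing to do is precisely what you describe---plug the second fundamental form from \eqref{shape operator} into the Gauss equation \eqref{TheGausseq} to read off $K=\langle h(e_1,e_1),h(e_2,e_2)\rangle-\lvert h(e_1,e_2)\rvert^2=-3f^2-c^2f^3$.
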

%% % % %% % % %% % % %% % % %% % % %% % % %% % % %% % % %% % % %% % % %% % % %% % % %% % % %% % % %% % % %% % % %% % % %% % % %% % % %% % % %% % % %% % % %% % % %% % % %% % % %% % % %% % % %% % % %% % % %% % % %% % % %% % % %% % % %% % % %% % % %% % % %% % % %% % % %% % % %% % % %% % % %% % % %% % % %% % % %%

Let $M$ be a PNMCV surface in $\mathbb{E}^4$, $f$ its mean curvature and $m\in M$ with $f(m)\neq 0$ and $(\mathrm{grad}\, f)(m)\neq0$. Next, by using Lemma \ref{Biconservative surfaces T.1.}, we would like to construct a local coordinate system on a PNMCV biconservative surface $M$ in $ \mathbb{E}^4$ on a neighborhood of $m\in M$.
\begin{lemma}\label{principal directions}
Consider a local orthornormal frame field $\{e_1,e_2;e_3,e_4\}$ on $M$ given in  Lemma \ref{Biconservative surfaces T.1.}. Then there exists local coordinate system $\{s,t\}$ on a neighborhood of $m$ such that   
$$f=f(s),\qquad e_1=\frac{\partial}{\partial s},\quad\mbox{ and }\quad e_2=f(s)^{3/4}\frac{\partial}{\partial t}$$
\end{lemma}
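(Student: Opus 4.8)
The plan is to integrate the Frobenius-type system coming from the connection coefficients in Lemma \ref{Biconservative surfaces T.1.} to produce the desired coordinates. First I would recall from \eqref{connectionsa}–\eqref{connectionsb} that $\nabla_{e_1}e_1=0$, so the integral curves of $e_1$ are geodesics, and that $[e_1,e_2]=\nabla_{e_1}e_2-\nabla_{e_2}e_1=\frac{3e_1(f)}{4f}e_2$, since $\nabla_{e_1}e_2=0$ and $\nabla_{e_2}e_1=-\frac{3e_1(f)}{4f}e_2$. Because $e_1$ is (up to sign) $\operatorname{grad} f/\|\operatorname{grad} f\|$ and $e_2(f)=0$, the function $f$ is constant along $e_2$ and strictly monotone along $e_1$; hence near $m$ one can choose a function $s$ with $e_1=\partial/\partial s$, i.e. take $s$ to be arclength along the $e_1$-geodesics, so that $f=f(s)$ and $e_1(f)=f'(s)\neq0$.

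Next I would introduce $t$ as a coordinate complementary to $s$. The clean way is to note that $[e_1,e_2]$ is proportional to $e_2$, so the distributions spanned by $e_1$ and by $e_2$ are each integrable and, more importantly, there is a coordinate $t$ with $e_2=\mu\,\partial/\partial t$ for some positive function $\mu$; equivalently one starts with coordinates $(s,\tilde t)$ in which $e_1=\partial_s$ and $e_2=a\,\partial_s+b\,\partial_{\tilde t}$, and then uses $e_1\perp e_2$ together with $|e_2|=1$ to force $a=0$, leaving $e_2=b\,\partial_{\tilde t}$. Writing $e_2=\mu\,\partial_t$ after a relabeling, the bracket relation $[e_1,e_2]=\frac{3f'}{4f}e_2$ becomes $e_1(\mu)\,\partial_t=\frac{3f'}{4f}\mu\,\partial_t$, i.e. $\partial_s(\log\mu)=\frac{3}{4}\partial_s(\log f)$, whence $\mu=C(t)f^{3/4}$ for some function $C$ of $t$ alone; absorbing $C(t)$ into a further reparametrization of $t$ (which does not disturb $s$ or $e_1$) gives $\mu=f(s)^{3/4}$, that is $e_2=f(s)^{3/4}\,\partial/\partial t$, as claimed.

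The main obstacle, and the step that needs the most care, is the construction of the coordinate $t$ making $e_2$ a coordinate vector field up to the scalar $f^{3/4}$: one must verify that the integrating factor depends only on $t$ so that it can be removed by reparametrizing $t$ without destroying $e_1=\partial_s$. This is exactly where the structure equations $\nabla_{e_1}e_1=\nabla_{e_1}e_2=0$ and $\nabla_{e_2}e_1=-\frac{3e_1(f)}{4f}e_2$ are used: they pin down $[e_1,e_2]$ to be a multiple of $e_2$ with the specific coefficient $\frac{3f'}{4f}$, which integrates to the power $f^{3/4}$. Everything else — monotonicity of $f$ along $e_1$, constancy of $f$ along $e_2$, and the choice of $s$ as arclength — is routine once Lemma \ref{Biconservative surfaces T.1.} is in hand, and the shape operators in \eqref{shape operator} play no role beyond having justified the connection coefficients.
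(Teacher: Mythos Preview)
Your argument is essentially the paper's: compute $[e_1,e_2]=\frac{3e_1(f)}{4f}e_2$ from \eqref{connectionsa}--\eqref{connectionsb}, integrate to obtain the factor $f^{3/4}$, and absorb the leftover function of $t$ by reparametrizing. The one place to tighten is the claim that $e_1\perp e_2$ forces $a=0$ in flow--box coordinates for $e_1$: this requires $g_{s\tilde t}=0$, which is not automatic from orthogonality alone (it holds if you specifically build semi--geodesic coordinates and invoke Gauss's lemma, but you did not say this). The paper bypasses this cleanly by first solving $e_1(E)=-\frac{3e_1(f)}{4f}E$, giving $E=f^{-3/4}$, so that $[e_1,Ee_2]=0$, and then invoking directly that two commuting, pointwise independent vector fields are simultaneously coordinate vector fields; this is exactly the ``integrating factor'' you describe, just applied before rather than after producing the coordinates.
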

\begin{proof}
Let  $\{e_1,e_2;e_3,e_4\}$ be a  a local orthornormal frame field given in  Lemma \ref{Biconservative surfaces T.1.}. Because of  $\nabla_{e_1}e_2=0, \nabla_{e_2}e_1=-\frac{3e_1(f)}{4f}e_2$, we
have $\left[e_1,e_2\right]=\frac{3e_1(f)}{4f}e_2$ which gives $\left[e_1,Ee_2 \right]=0$
for any function $E$ satisfying
\begin{equation}\label{Lemprincipal directionsEq1}
e_1(E)=\frac{-3e_1(f)}{4f}E.
\end{equation} 
Thus, there exists a local 
coordinate system $(s,t)$ such that   $e_1=\frac{\partial}{\partial s}$ and $E e_2=\frac{\partial}{\partial t}$.

Moreover, $e_2(f)=0$ which yields $f=f(s)$ and because of \eqref{Lemprincipal directionsEq1} we can choose $E$ as 
$E=f(s)^{-3/4}.$
\end{proof}
%% % % %% % % %% % % %% % % %% % % %% % % %% % % %% % % %% % % %% % % %% % % %% % % %% % % %% % % %% % % %% % % %% % % %% % % %% % % %% % % %% % % %% % % %% % % %% % % %% % % %% % % %% % % %% % % %% % % %% % % %% % % %% % % %% % % %% % % %% % % %% % % %% % % %% % % %% % % %% % % %% % % %% % % %% % % %% % % %%

\begin{corry}\label{CorryMCODE}
Let $M$ be a biconservative surface with parallel normalized	mean curvature vector in $\mathbb{E}^4$.
Then, 
the mean curvature of $M$ satisfies the following partial differential equation 
\begin{equation} \label{mean curvature  partial differential}
\frac{9 f'(s)^2}{16 f(s)^{7/2}}+c^2 f(s)^{3/2}+9 f(s)^{1/2}=c_2^2
\end{equation}
for a positive constant $c_2^2$, where $s$ is the local coordinate given in Lemma \ref{principal directions}.
\end{corry}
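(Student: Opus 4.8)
The plan is to evaluate the Gaussian curvature $K$ of $M$ in two ways and then integrate the resulting relation once. On the one hand, the Corollary stated immediately after Lemma~\ref{Biconservative surfaces T.1.} already gives $K=-3f^{2}-c^{2}f^{3}$. On the other hand, $K$ can be computed from the connection coefficients furnished by Lemma~\ref{Biconservative surfaces T.1.}: setting $\phi=\frac{3e_{1}(f)}{4f}$, the only non-vanishing covariant derivatives among the $e_{i}$ are $\nabla_{e_{2}}e_{1}=-\phi e_{2}$ and $\nabla_{e_{2}}e_{2}=\phi e_{1}$, while $[e_{1},e_{2}]=\phi e_{2}$ (as noted in the proof of Lemma~\ref{principal directions}). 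Expanding $R(e_{1},e_{2})e_{2}=\nabla_{e_{1}}\nabla_{e_{2}}e_{2}-\nabla_{e_{2}}\nabla_{e_{1}}e_{2}-\nabla_{[e_{1},e_{2}]}e_{2}$ then gives
\begin{equation}\nonumber
K=\langle R(e_{1},e_{2})e_{2},e_{1}\rangle=e_{1}(\phi)-\phi^{2}.
\end{equation}

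I would then pass to the coordinates $(s,t)$ of Lemma~\ref{principal directions}, where $e_{1}=\partial/\partial s$ and $f=f(s)$, so that $\phi=\frac{3f'(s)}{4f(s)}$ and $e_{1}(\phi)=\phi'(s)$. Substituting yields
\begin{equation}\nonumber
K=\frac{3f''}{4f}-\frac{21(f')^{2}}{16f^{2}}
\end{equation}
(equivalently this is $-G''/G$ for the metric $ds^{2}+G(s)^{2}\,dt^{2}$ with $G=f^{-3/4}$, since $e_{2}=f^{3/4}\,\partial/\partial t$). Equating the two expressions for $K$ produces the second-order ODE
\begin{equation}\nonumber
\frac{3f''}{4f}-\frac{21(f')^{2}}{16f^{2}}+3f^{2}+c^{2}f^{3}=0 .
\end{equation}

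The last step is to recognise this equation as an exact derivative. Multiplying it by the integrating factor $\frac{3f'}{2f^{5/2}}$ and differentiating term by term, one checks the identity
\begin{equation}\nonumber
\frac{3f'}{2f^{5/2}}\left(\frac{3f''}{4f}-\frac{21(f')^{2}}{16f^{2}}+3f^{2}+c^{2}f^{3}\right)=\frac{d}{ds}\left(\frac{9(f')^{2}}{16f^{7/2}}+c^{2}f^{3/2}+9f^{1/2}\right),
\end{equation}
so the quantity in parentheses on the right-hand side is constant in $s$. Since $f=\langle H,H\rangle^{1/2}$ is non-vanishing it is positive on the neighbourhood under consideration, hence each of the three summands is nonnegative and $9f^{1/2}>0$; therefore the constant is positive and may be written as $c_{2}^{2}$, which is \eqref{mean curvature partial differential}. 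The only genuine computation is the verification of the exact-derivative identity above, which is routine, so I do not expect a real obstacle.
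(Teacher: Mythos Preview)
Your proof is correct and follows essentially the same route as the paper: the paper applies the Gauss equation directly to obtain the second-order ODE $f f''-\tfrac{7}{4}(f')^{2}+4f^{4}+\tfrac{4}{3}c^{2}f^{5}=0$ (which is your equation multiplied by $\tfrac{4}{3}f^{2}$), then multiplies by the integrating factor $\tfrac{9f'}{8f^{9/2}}$ and integrates. Your version is slightly tidier in that you invoke the earlier corollary for the extrinsic expression of $K$ and you actually justify why the integration constant is positive, which the paper only asserts.
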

\begin{proof}
Consider a local orthonormal frame field $\{e_1,e_2;e_3,e_4\}$ given in  Lemma \ref{Biconservative surfaces T.1.} and local coordinate systme $(s,t)$ given in Lemma \ref{principal directions}. Note that the Gauss equation \eqref{TheGausseq} for $X=Z=\partial_s$ and $Y=\partial_t$ gives
$$f(s) f''(s)-\frac{7}{4} f'(s)^2+4 f(s)^4+\frac{4}{3} c^2 f(s)^5=0.$$
By multiplying this equation with $\frac{9 f'(s)}{8 f(s)^{9/2}}$
and integrating the equation obtained, we get \eqref{mean curvature  partial differential} for a constant $c_2$ which can be assumed to be positive.
\end{proof}

%%%%%%%%%%%%%

\begin{proposition}\label{PNMCVBicPrflCrv}
Let $M$ be a proper PNMCV biconservative surfaces  in $\mathbb{E}^4$, where $f$ is the mean curvature of $M$ in $\mathbb{E}^4$ and $e_1=\frac{\nabla f}{\left| \nabla f\right| }$.	Then, 
\begin{itemize}
\item [(a)] An integral curve of $e_1$ lies on a $3$-dimensional
hyperplane of  $\mathbb{E}^4$.
\item [(b)] The curvature and torsion of an integral curve of $e_1$ are 
\begin{subequations}\label{KTAUALL}
\begin{eqnarray}
\label{KTAUALLa}\kappa(s)&=&f(s)\sqrt{1+c^2f(s)},\\
\label{KTAUALLb} \tau(s)&=&\frac{c{f}^\prime (s)}{2\sqrt{f(s)}(1+c^2f(s))}.
\end{eqnarray}
\end{subequations}
\item [(c)] Any two integral curves of $e_1$ are congruent.
\end{itemize} 
\end{proposition}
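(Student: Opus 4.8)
The plan is to work directly with an integral curve of $e_1$ via the Gauss and Weingarten formulas, inserting the explicit data of Lemma \ref{Biconservative surfaces T.1.}. Fix $m\in M$; in the coordinates $(s,t)$ of Lemma \ref{principal directions} let $\gamma$ be the integral curve $s\mapsto (s,t_0)$ of $e_1=\partial_s$ through $m$, which is unit-speed since $|e_1|\equiv 1$, so $\gamma'(s)=e_1$. Writing $f'=e_1(f)=df/ds$, the first step is to record how $\tilde\nabla_{e_1}$ acts on the frame $\{e_1,e_2;e_3,e_4\}$ along $\gamma$: from $\tilde\nabla_XY=\nabla_XY+h(X,Y)$ and $\tilde\nabla_X\xi=-A_\xi X+\nabla^\perp_X\xi$, together with \eqref{connectionsAll} and \eqref{shape operator} (and $h(e_1,e_2)=0$, as both shape operators are diagonal in this frame), one gets
\[
\tilde\nabla_{e_1}e_1=-f e_3+c f^{3/2}e_4,\qquad \tilde\nabla_{e_1}e_2=0,\qquad \tilde\nabla_{e_1}e_3=f e_1,\qquad \tilde\nabla_{e_1}e_4=-c f^{3/2}e_1 .
\]
Part (a) is then immediate: $\tilde\nabla_{e_1}e_2=0$ means $e_2$ restricts along $\gamma$ to a parallel, hence constant, unit vector $v\in\mathbb E^4$; since $\frac{d}{ds}\langle\gamma(s),v\rangle=\langle e_1,e_2\rangle=0$, the curve $\gamma$ lies in the affine hyperplane $\{x\in\mathbb E^4:\langle x,v\rangle=\langle\gamma(s_0),v\rangle\}$.

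For (b), $\gamma''=\tilde\nabla_{e_1}e_1=-f e_3+c f^{3/2}e_4$ gives the curvature $\kappa=|\gamma''|=f\sqrt{1+c^2f}$, which is \eqref{KTAUALLa}, and the principal normal $N=\gamma''/\kappa=(1+c^2f)^{-1/2}(-e_3+c\sqrt f\,e_4)$. Differentiating $N$ along $\gamma$ with the identities above and abbreviating $g=\sqrt{1+c^2f}$, the $e_1$-component of $\tilde\nabla_{e_1}N$ collapses (as it must) to $-\kappa\,\gamma'$, while the transverse part simplifies — using $2g\,e_1(g)=c^2f'$ and $g^2-c^2f=1$ — to
\[
\tilde\nabla_{e_1}N+\kappa\,\gamma'=\frac{c f'}{2g^3\sqrt f}\,(c\sqrt f\,e_3+e_4),
\]
a vector of norm $\frac{c f'}{2\sqrt f\,(1+c^2f)}$; hence the first torsion $\tau$ is \eqref{KTAUALLb}, with first binormal $B_1=g^{-1}(c\sqrt f\,e_3+e_4)$. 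Since $N,B_1\in\mathrm{span}\{e_3,e_4\}$, $\gamma'=e_1$, and the remaining Frenet vector $e_2$ satisfies $\tilde\nabla_{e_1}e_2=0$, the second torsion vanishes, in accordance with (a). (When $c=0$ one simply has $\tau\equiv 0$ and $\gamma$ planar, and \eqref{KTAUALL} still holds.)

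Finally, for (c): by (b) the curvature $\kappa(s)$ and torsion $\tau(s)$, together with vanishing second torsion, depend only on the arc-length parameter $s$ and not on the label $t_0$ of the integral curve; moreover $M$ proper forces $f\neq 0$, so $\kappa>0$. By the uniqueness part of the fundamental theorem of curves in $\mathbb E^4$, any two integral curves of $e_1$, having the same curvature functions on the same $s$-interval, differ by a rigid motion of $\mathbb E^4$, i.e.\ are congruent. The only nonroutine computation in the whole argument is the torsion in (b), and the point to watch there is the algebraic collapse of $\tilde\nabla_{e_1}N$ after collecting the $e_3$- and $e_4$-coefficients and repeatedly using $g^2=1+c^2f$, together with the usual sign/orientation convention in the definition of $\tau$ and the degenerate case $c=0$.
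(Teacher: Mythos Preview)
Your proof is correct and follows essentially the same strategy as the paper: compute the Frenet invariants of $\gamma$ by applying the Gauss and Weingarten formulas to the explicit frame data of Lemma~\ref{Biconservative surfaces T.1.}, obtaining $\kappa$, $N$, $\tau$, and $B$ in turn. The one substantive difference is in part~(a): you observe directly that $\tilde\nabla_{e_1}e_2=0$ forces $e_2$ to be a constant vector along $\gamma$, so $\gamma$ lies in the affine hyperplane orthogonal to that vector; the paper instead pushes the Frenet computation one step further, differentiating $B$ to verify $dB/ds=-\tau N$ and hence $\tau_2=0$. Your route is shorter and conceptually cleaner, since it avoids that final differentiation and identifies the hyperplane explicitly; the paper's route has the minor advantage of confirming directly that $\{T,N,B\}$ is the Frenet frame and $e_2$ the second binormal $\tilde B$.
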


\begin{proof} Let $\left\lbrace e_1,e_2;e_3,e_4\right\rbrace  $ be the local orthonormal frame on $M$ given by Lemma \ref{Biconservative surfaces T.1.} and 
we suppose that $\gamma$ is an integral curve of $e_1$ and it is 
parametrized by $\gamma(s)=x(s,t_0)$. Let $T=\gamma^\prime$ be tangent of $M$ and consider the moving frame 
field span $\left\lbrace T(s),N(s),B(s),\tilde{B}(s)\right\rbrace $ of the curve $\gamma$ on 
$M$. We consider the following  Frenet formulas
\begin{eqnarray}\label{Frenet formulas}
\frac{dT}{ds}&=&\kappa N,\notag\\
\frac{dN}{ds}&=&-\kappa T+\tau B,\notag\\
\frac{dB}{ds}&=&-\tau N +\tau_2 \tilde{B},\\
\frac{d\tilde{B}}{ds}&=&-\tau_2 B,\notag
\end{eqnarray} 
where $\kappa,\tau$ and $\tau_2$  are curvatures of $\gamma$. 

We proceed to compute the curvature and torsion of an integral curve $\gamma$ of 
$e_1$. By combining \eqref{shape operator} with Gauss formula and considering Weingarten formula we obtain 
\begin{equation}\label{IntCrve1TT}
\tilde{\nabla}_{T(s)} T(s)=-f(s)e_3(s)+cf(s)^{3/2}e_4(s),
\end{equation}

\begin{equation}\label{IntCrve1Te3}
\tilde{\nabla}_{e_1}e_3=e_3^{\prime}(s)=f(s)T
\end{equation}  
 and 
\begin{equation}\label{IntCrve1Te4}
\tilde{\nabla}_{e_1}e_4=e_4^{\prime}(s)=-cf(s)^{3/2}T,
\end{equation} 
where $e_3(s), e_4(s)$ are restrictions of $e_3$ and $e_4$ to $\gamma$.

By combining \eqref{Frenet formulas} with \eqref{IntCrve1TT}, we get 
\eqref{KTAUALLa} and
\begin{align}\label{N(s)}
N(s)=\frac{dT}{ds} \frac{1}{\kappa}&= \dfrac{-1}{\sqrt{1+c^2f(s)}}e_3(s)+ c\sqrt{\frac{f(s)}{1+c^2f(s)}}e_4(s).
\end{align} 
By differentiation of \eqref{N(s)} with respect to $e_1$ and using \eqref{IntCrve1Te3}, \eqref{IntCrve1Te4}, we obtain
\begin{align}\label{frac{dN}{ds}}
\frac{dN}{ds}= \dfrac{c^2 f^\prime(s)}{2(1+c^2f(s))^{3/2}}e_3(s)
+\dfrac{c f^\prime(s)}{2\sqrt{f(s)}(1+c^2f(s))^{3/2}}e_4(s)
- \frac{f(s)(1+c^2)}{\sqrt{{1+c^2f(s)}}}T.
\end{align}
By combining equations \eqref{KTAUALLa}, \eqref{Frenet formulas} and \eqref{frac{dN}{ds}}, 
we obtain the torsion $\tau$ of $\gamma$ as given in \eqref{KTAUALLb}
and 
\begin{equation}\label{B(s)}
B(s)=\frac{c\sqrt{f(s)}}{\sqrt{1+c^2f(s)}}e_3(s)+\frac{1}{\sqrt{1+c^2f(s)}}e_4(s).
\end{equation}
By applying $e_1$ to the equation \eqref{B(s)} and using \eqref{IntCrve1Te3}, \eqref{IntCrve1Te4}, we obtain
$$\dfrac{dB}{ds}=\tau N(s),$$ 
which yields that $\tau_2=0$. Hence, we have the part (a) and part (b) of the Lemma.

Now, we want to show part (c) of the Lemma. Let $m_1,m_2\in M$ lie on the same integral curve of $e_2$. Consider  the local coordinate system $(s,t)$ given in Lemma \ref{principal directions}.  Note that $e_2(f)=0$ and $\nabla_{e_1}e_2=0$ yields $e_2(e_1(f))=0$ which implies $e_1(f)=f'(s)$.  Therefore, because of \eqref{KTAUALL}, any integral curves $\gamma_1$ and $\gamma_2$ of $e_1$ have the same curvature and torsion functions. Hence, they are congruent to each other. 
\end{proof}
% % %% % % %% % % %% % % %% % % %% % % %% % % %% % % %% % % %% % % %% % % %% % % %% % % %% % % %% % % %% % % %% % % %% % % %% % % %% % % %% % % %% % % %% % % %% % % %% % % %% % % %% % % %% % % %% % % %% % % %% % % %% % % %% % % %% % % %% % % %% % % %% % % %% % % %% % % %% % % %% % % %% % % %% % % %% % % %%%% % % %% % % %% % % %% % % %% % % %% % % %% % % %% % % %% % % %% % % %% % % %% % % %% % % %% % % %% % % %% % % %% % % %% % % %% % % %% % % %% % % %% % % %% % % %% % % %% % % %% % % %% % % %% % % %% % % %% % % %% % % %% % % %% % % %% % % %% % % %% % % %% % % %% % % %% % % %% % % %% % % %% % % %% % % %% % % %%

Now, we are ready to get main classification theorem
\begin{theorem}\label{MainTHMparametrization}
Let $M$ be a  PNMCV surface in the $4$-dimensional 
Euclidean space $\mathbb{E}^4$ with a point $m\in M$ at which $f(m)>0, (\mathrm{grad} f)(m) \neq 0$, where $f$ is the mean curvature of $M$. If $M$ is biconservative, then there exists a neighborhood of $m$ on which $M$ is congruent to the simple rotational surface
\begin{equation}\label{MainTHMparametrizationSurfParam}
x(s,t)=\left(\alpha _1(s)\cos t, \alpha _1(s)\sin t,\alpha _2(s),\alpha _3(s)\right)
\end{equation}
with arc-length parametrized smooth profile curve $\displaystyle\alpha(s)=\left(\alpha _1(s),\alpha _2(s),\alpha _3(s)\right)$, 
$$\alpha _1(s)= \frac1{c_2 f(s)^{3/4}}$$
 whose curvature and torsion are given by \eqref{KTAUALL}. 
\end{theorem}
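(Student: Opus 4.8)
The plan is to build the isometric immersion explicitly from the data assembled in the previous results: Lemma \ref{Biconservative surfaces T.1.} gives the connection forms and shape operators, Lemma \ref{principal directions} gives an adapted coordinate system $(s,t)$ in which $e_1=\partial_s$ and $e_2=f^{3/4}\partial_t$, and Proposition \ref{PNMCVBicPrflCrv} tells us that each $s$-curve $\gamma$ is a planar-binormal curve lying in a $3$-plane, that all these curves are congruent, and that the metric has the warped-product form $ds^2+f(s)^{-3/2}dt^2$. So the first step is to observe that, since $\nabla_{e_1}e_1=0$ and $\nabla^\perp e_3=\nabla^\perp e_4=0$, the $t$-curves are circles: indeed, fixing $s=s_0$, the curve $t\mapsto x(s_0,t)$ has $\tilde\nabla_{e_2}e_2$ equal to a fixed multiple of $e_1$ plus a normal component determined by $h(e_2,e_2)=3fe_3-cf^{3/2}e_4$, all of which are constant along the $t$-curve up to the rotation in the $(e_1,\text{that normal})$ plane; hence each $t$-curve is a circle of radius $\alpha_1(s_0)$ in an affine $2$-plane, and the whole surface is a rotational surface with axis the orthogonal complement of that plane.

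Second, I would pin down the profile curve. Because the $t$-translation acts by isometries of $\mathbb E^4$ (it rotates the $x_1x_2$-plane and fixes the rest), after a rigid motion we may take the orbit of $m$ to be the circle $(\alpha_1\cos t,\alpha_1\sin t,\text{const},\text{const})$, and the surface is $x(s,t)=(\alpha_1(s)\cos t,\alpha_1(s)\sin t,\alpha_2(s),\alpha_3(s))$ for a curve $\alpha=(\alpha_1,\alpha_2,\alpha_3)$. The induced metric of such a surface is $\big((\alpha_1')^2+(\alpha_2')^2+(\alpha_3')^2\big)ds^2+\alpha_1^2\,dt^2$, and matching with $ds^2+f(s)^{-3/2}dt^2$ forces $\alpha$ to be arc-length parametrized and $\alpha_1(s)=f(s)^{-3/4}$ up to a constant factor; that constant is identified with $1/c_2$ by computing the norm of $\mathrm{grad} f$, or equivalently the second fundamental form coefficient $h(e_2,e_2)$, in the rotational model and comparing with \eqref{shape operator} — the coefficient of $e_3$ in $h(e_2,e_2)$ is $3f$, and in the rotational picture the $e_3$-component of $\tilde\nabla_{e_2}e_2$ is $-1/\alpha_1$ times a unit normal, so $\alpha_1=1/(3f)\cdot(\text{normalization})$; carrying the constant through gives $\alpha_1=1/(c_2 f^{3/4})$, matching Corollary \ref{CorryMCODE} where $c_2$ is exactly the integration constant.

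Third, I would check that the space curve $\alpha$ obtained this way has precisely the curvature and torsion in \eqref{KTAUALL}. Here one uses Proposition \ref{PNMCVBicPrflCrv}(a): the integral curve $\gamma(s)=x(s,t_0)$ already lies in a hyperplane, and in the rotational parametrization $\gamma(s)=(\alpha_1(s)\cos t_0,\alpha_1(s)\sin t_0,\alpha_2(s),\alpha_3(s))$ is congruent to $\alpha(s)=(\alpha_1(s),\alpha_2(s),\alpha_3(s))$ by the rotation sending $(\cos t_0,\sin t_0)$ to $(1,0)$; hence $\alpha$ has the same Frenet curvature $\kappa$ and torsion $\tau$ as $\gamma$, namely \eqref{KTAUALLa}–\eqref{KTAUALLb}. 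Conversely, given $f$ as a solution of \eqref{mean curvature partial differential} and a space curve $\alpha$ with $\alpha_1=1/(c_2f^{3/4})$ and Frenet data \eqref{KTAUALL} (such an $\alpha$ exists and is unique up to rigid motion by the fundamental theorem of space curves, provided one checks the three conditions $\alpha_1^2+\alpha_2'^2+\alpha_3'^2$-consistency are compatible, which is where \eqref{mean curvature partial differential} is used), the surface \eqref{MainTHMparametrizationSurfParam} is readily verified to be PNMCV and biconservative via Proposition \ref{BicPNMCEnProp}.

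The main obstacle I expect is the rigidity/consistency bookkeeping in the middle step: one must verify that the three scalar functions $\alpha_1,\alpha_2',\alpha_3'$ can be simultaneously realized — i.e., that $\alpha_1=1/(c_2 f^{3/4})$ together with arc-length parametrization $(\alpha_1')^2+(\alpha_2')^2+(\alpha_3')^2=1$ and the prescribed curvature $\kappa$ are not overdetermined. This is exactly the role of the first integral \eqref{mean curvature partial differential} from Corollary \ref{CorryMCODE}: substituting $\alpha_1'=-\tfrac34 c_2^{-1}f^{-7/4}f'$ into $1-(\alpha_1')^2=(\alpha_2')^2+(\alpha_3')^2\geq 0$ and into the Frenet relation $\kappa^2=(\alpha_1''/\ldots)$ reproduces \eqref{mean curvature partial differential}, so the compatibility is automatic. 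The remaining steps — identifying the axis, reducing to the normal form by a rigid motion, and the converse verification — are routine once the rotational structure is in hand.
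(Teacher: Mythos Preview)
Your approach is genuinely different from the paper's, and the difference is worth noting. The paper does not argue geometrically that the $t$-curves are circles and then deduce rotational structure. Instead it integrates the structure equations directly: from $\tilde\nabla_{\partial_t}\partial_s=-\tfrac{3f'}{4f}\partial_t$ one gets the separation $x(s,t)=f(s)^{-3/4}\Theta(t)+\Gamma(s)$, and then differentiating $\tilde\nabla_{e_2}e_2$ along $t$ (using \eqref{MainTHMparametrizationEq1a}--\eqref{MainTHMparametrizationEq1d}) produces the linear ODE $\Theta'''+c_2^2\,\Theta'=0$, where $c_2$ is exactly the constant from Corollary \ref{CorryMCODE}. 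This immediately gives $\Theta(t)=\cos(c_2t)A_1+\sin(c_2t)A_2+A_3$, after which the metric \eqref{E4BicSurfPNMCVIndcMetric} pins down the vectors $A_i$ and $\Gamma$ up to a rigid motion. The identification $\alpha_1=1/(c_2 f^{3/4})$ and the appearance of $c_2$ are thus forced by the ODE for $\Theta$, not by any separate matching of second-fundamental-form coefficients as you suggest.

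Your geometric route can be made to work, but the sketch has a real gap that is \emph{not} the bookkeeping you flag. Knowing that each $t$-curve is a circle (which, incidentally, you have not actually shown: the sentence ``all of which are constant along the $t$-curve up to the rotation'' is not a proof --- you must compute $\tilde\nabla_{e_2}\tilde\nabla_{e_2}e_2$ and use Corollary \ref{CorryMCODE} to see it equals $-c_2^2 f^{3/2}e_2$) does \emph{not} by itself imply the surface is rotational in $\mathbb E^4$. In codimension two, a one-parameter family of circles need not share a common axis: you must also show that the affine $2$-planes containing the circles are mutually parallel and that their centers lie on a fixed orthogonal $2$-plane. This is precisely what the separated form $x=f^{-3/4}\Theta(t)+\Gamma(s)$ delivers for free in the paper's argument, and it is the step you are skipping. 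Either derive that separation (one line from $x_{ts}=-\tfrac{3f'}{4f}x_t$) before invoking circles, or add an argument that $\tilde\nabla_{e_1}$ of the circle's plane-vectors stays in that plane. The converse verification you outline at the end belongs to Theorem \ref{MainTHMparametrizationCnverse}, not to the present statement.
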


\begin{proof}
We consider a local orthonormal frame field $\{e_1,e_2;e_3,e_4\}$ given in Lemma \ref{Biconservative surfaces T.1.} with 
$$e_1=\frac{\partial}{\partial s}\quad\mbox{ and }\quad e_2=f(s)^{3/4}\frac{\partial}{\partial t},$$
where $(s,t)$ is local coordinate system give in Lemma \ref{principal directions}. Note that we also have $f=f(s)$ which satisfies
\eqref{mean curvature  partial differential} for a constant $c_2$ because of Corollary \ref{CorryMCODE}. Moreover, the induced metric of $M$ is
\begin{equation}\label{E4BicSurfPNMCVIndcMetric}
g=ds\otimes ds+\frac 1{f(s)^{3/2}}dt\otimes dt.
\end{equation}

By combining \eqref{connectionsb} with \eqref{shape operator}, we have 
\begin{subequations}\label{MainTHMparametrizationEq1ALL}
\begin{eqnarray}
\label{MainTHMparametrizationEq1a} \tilde{\nabla}_{\partial_{t}}\partial_{s}&=&-\frac{3f'}{4f}\partial_{t},\\
\label{MainTHMparametrizationEq1b} \tilde{\nabla}_{e_2}{e_2}&=&\frac{3f'}{4f}\partial_{s} +3fe_3- c f^{3/2} e_4,\\
\label{MainTHMparametrizationEq1c} \tilde{\nabla}_{\partial_{t}}{e_3}&=&-3f\partial_{t},\\
\label{MainTHMparametrizationEq1d} \tilde{\nabla}_{\partial_{t}}{e_4}&=&cf^{3/2}\partial_{t}.
\end{eqnarray}
\end{subequations}

Let $\text{x}:M\rightarrow\mathbb E^4$ be an isometric immersion. Then, \eqref{MainTHMparametrizationEq1a} becomes
$$\text{x}_{ts}=\frac{-3 f^\prime}{4f} \text{x}_t.$$
By solving this equation, we get
\begin{equation}\label{MainTHMparametrizationEq2}
\text{x}(s,t)=f^{-3/4}\Theta (t)+\Gamma (s)
\end{equation}
for some $\mathbb E^4$-valued smooth functions $\Theta, \Gamma$.

By combining \eqref{MainTHMparametrizationEq2} with \eqref{MainTHMparametrizationEq1b}, we obtain
$$f(s)^{3/4} \Theta ''(t)+\frac{9 f'(s)^2}{16 f(s)^{11/4}}\Theta (t)-\frac{3 f'(s) }{4 f(s)}\Gamma '(s)-3 f(s) e_3+c f(s)^{3/2} e_4=0.$$
By applying $\partial_t$ to this equation and using \eqref{MainTHMparametrizationEq1a}, \eqref{MainTHMparametrizationEq1c} and \eqref{MainTHMparametrizationEq1d} we obtain
$$\Theta'''(t)+\left(\frac{9 f'(s)^2}{16 f(s)^{7/2}}+c^2 f(s)^{3/2}+9 f(s)^{1/2}\right)\Theta '(t) =0.$$
By combining this equation with \eqref{mean curvature  partial differential}, we get
$$\Theta'''(t)+c_2^2\Theta '(t) =0.$$
Thus, $\Theta$ has the form
$\Theta(t)=\cos(c_2t)A_1+\sin(c_2t)A_2+A_3$
for some constant vectors $A_1,A_2,A_3$. Therefore, \eqref{MainTHMparametrizationEq2} becomes
\begin{equation}\label{MainTHMparametrizationEq3}
\text{x}(s,t)=f(s)^{-3/4}\cos(c_2t)A_1+f(s)^{-3/4}\sin(c_2t)A_2+f(s)^{-3/4}A_3+\Gamma (s).
\end{equation}

By considering \eqref{E4BicSurfPNMCVIndcMetric}, we obtain
\begin{align}\nonumber
\begin{split}
\langle A_1,A_1\rangle=a_1^2, \qquad&\langle A_2,A_2\rangle=\langle A_3,A_3\rangle=\frac 1{c_2^2},\\
\langle A_i,A_j\rangle=\langle \Gamma'(s),A_2\rangle=\langle \Gamma'(s),A_3\rangle=0\qquad&\mbox{ if  $i\neq j$}
\end{split}
\end{align}
and
\begin{equation}\nonumber
\langle \Gamma'(s),\Gamma'(s)\rangle-\frac{3 f'(s)}{2 f(s)^{7/4}}\langle A_1,\Gamma'(s)\rangle+\frac{9 f'(s)^2}{16 f(s)^{7/2}}\left(a_1^2+\frac1{c_2^2}\right)=1
\end{equation}
for a non-zero constant $a_1.$ Therefore, up to a suitable isometry of $\mathbb E^4$, we may assume
\begin{align}\label{MainTHMparametrizationEq5}
\begin{split}
A_1=&\left(0,0,a_1,0\right),\\
A_2=&\left(\frac 1{c_2},0,0,0\right),\\
A_3=&\left(0,\frac 1{c_2},0,0\right),\\
\Gamma(s)=&\left(0,0,\alpha_2(s)-\frac{a_1}{f(s)^{3/4}},\alpha_3(s)\right)
\end{split}
\end{align}
for some smooth functions $\alpha_2,\alpha_3$. Hence, \eqref{MainTHMparametrizationEq3} gives \eqref{MainTHMparametrizationSurfParam} after re-defining $t$ properly. Since $t=\mbox{const.}$ is an integral curve of $e_1$, Proposition \ref{PNMCVBicPrflCrv} implies that the curvature and torsion of the curve $\displaystyle\alpha(s)=\left(\frac1{c_2 f(s)^{3/4}},\alpha _2(s),\alpha _3(s)\right)$  are the functions $\kappa$ and $\tau$ given in \eqref{KTAUALL}.  
\end{proof}

%%%%%%%%%%%%%%%%%%%%%%%%%%%%%%%
%%%%%%%%%%%%%%%%%%%%%%%%%%%%%%%
%%%%%%%%%%%%%%%%%%%%%%%%%%%%%%%
%%%%%%%%%%%%%%%%%%%%%%%%%%%%%%%
%%%%%%%%%%%%%%%%%%%%%%%%%%%%%%%
%%%%%%%%%%%%%%%%%%%%%%%%%%%%%%%
%%%%%%%%%%%%%%%%%%%%%%%%%%%%%%%
%%%%%%%%%%%%%%%%%%%%%%%%%%%%%%%
%%%%%%%%%%%%%%%%%%%%%%%%%%%%%%%

Next, we obtain the converse of the above theorem.
\begin{theorem}\label{MainTHMparametrizationCnverse}
Let $M$ be the simple rotational surface in $\mathbb E^4$ given by
\eqref{MainTHMparametrizationSurfParam}
with arc-length parametrized smooth profile curve $\displaystyle\alpha(s)=\left(\frac1{c_2 f(s)^{3/4}},\alpha _2(s),\alpha _3(s)\right)$ whose curvature $\kappa $ is given by \eqref{KTAUALLa}, where $f:(a,b)\rightarrow\mathbb E^4$ is a positive function satisfying \eqref{mean curvature  partial differential} for a constant $c_2$. Then, $M$ is a PNMCV  biconservative surface. Furthermore, its mean curvature is  $f$.
\end{theorem}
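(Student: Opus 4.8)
The strategy is to run the computations of Theorem~\ref{MainTHMparametrization} in reverse: start from the explicit parametrization \eqref{MainTHMparametrizationSurfParam}, build an adapted orthonormal frame, read off the shape operators and connection forms, and then invoke Lemma~\ref{Biconservative surfaces T.1.} to conclude that $M$ is PNMCV and biconservative. First I would set $\alpha_1(s)=1/(c_2 f(s)^{3/4})$ and differentiate the parametrization to get $x_s=\alpha'(s)=(\alpha_1',\alpha_2',\alpha_3')$ and $x_t=(-\alpha_1\sin t,\alpha_1\cos t,0,0)$. Since $\alpha$ is arc-length parametrized, $\langle x_s,x_s\rangle=1$, and clearly $\langle x_s,x_t\rangle=0$ and $\langle x_t,x_t\rangle=\alpha_1^2=1/(c_2^2 f^{3/2})$; comparing with \eqref{E4BicSurfPNMCVIndcMetric} shows the induced metric is the right one, so $e_1:=x_s=\partial_s$ and $e_2:=f^{3/4}\,\partial_t/... $ — more precisely $e_2:=(c_2 f^{3/4})\,\partial_t$ wait, one must be careful: $|x_t|=1/(c_2 f^{3/4})$, so $e_2=c_2 f^{3/4}\,\partial_t$; this only affects the normalization constant, and one checks it is consistent with Lemma~\ref{principal directions} up to the harmless rescaling $t\mapsto c_2 t$ that was performed at the end of Theorem~\ref{MainTHMparametrization}. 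From $\nabla_{e_1}e_1=0$ (since $\alpha$ is a unit-speed curve, $x_{ss}$ is normal) and a short computation of $x_{st}$, $x_{tt}$ one recovers the connection coefficients \eqref{connectionsa}--\eqref{connectionsb}.

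Next I would construct the normal frame $\{e_3,e_4\}$. The curve $\alpha$ has, by hypothesis, curvature $\kappa=f\sqrt{1+c^2f}$; since \eqref{mean curvature partial differential} is exactly the first integral of the Gauss equation appearing in Corollary~\ref{CorryMCODE}, the torsion of $\alpha$ is forced to be \eqref{KTAUALLb} and its third curvature vanishes, so $\alpha$ lies in a hyperplane (this is the content one borrows from Proposition~\ref{PNMCVBicPrflCrv}, read backwards). Writing the Frenet frame $\{T,N,B\}$ of $\alpha$, I would define $e_3$ and $e_4$ as the specific linear combinations of $N,B$ dictated by \eqref{N(s)} and \eqref{B(s)} inverted, namely
\begin{align*}
e_3(s)&=\frac{-1}{\sqrt{1+c^2f}}\,N(s)+\frac{c\sqrt f}{\sqrt{1+c^2f}}\,B(s),\\
e_4(s)&=c\sqrt{\frac{f}{1+c^2f}}\,N(s)+\frac{1}{\sqrt{1+c^2f}}\,B(s),
\end{align*}
extended to all of $M$ by rotational invariance. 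One checks these are unit, orthogonal, orthogonal to $e_1$ and $e_2$, and — using the Frenet equations together with \eqref{KTAUALLa}, \eqref{KTAUALLb} — that $\tilde\nabla_{e_1}e_3=fe_1$ and $\tilde\nabla_{e_1}e_4=-cf^{3/2}e_1$, which says $\nabla^\perp_{e_1}e_3=\nabla^\perp_{e_1}e_4=0$ and gives the $e_1e_1$-entries $h^3_{11}=-f$, $h^4_{11}=cf^{3/2}$ of the shape operators. The $e_2$-direction is handled by applying $\tilde\nabla_{\partial_t}$ to $e_3,e_4$ (as in \eqref{MainTHMparametrizationEq1c}--\eqref{MainTHMparametrizationEq1d}): rotational symmetry makes these computations purely in the $(x_1,x_2)$-plane, yielding $\nabla^\perp_{e_2}e_3=\nabla^\perp_{e_2}e_4=0$ and $h^3_{22}=3f$, $h^4_{22}=-cf^{3/2}$. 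That establishes both \eqref{shape operator} and \eqref{connectionsc}, so every condition of Lemma~\ref{Biconservative surfaces T.1.} holds, and the lemma's sufficiency direction gives that $M$ is a biconservative PNMCV surface with mean curvature vector $H=fe_3$, hence mean curvature function $f$.

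The main obstacle I anticipate is bookkeeping rather than conceptual: one must verify that the single scalar ODE \eqref{mean curvature partial differential} is strong enough to force \emph{both} the Gauss equation $f f''-\tfrac74 f'^2+4f^4+\tfrac43 c^2 f^5=0$ (needed so that $\nabla_{e_2}e_2$ has the form \eqref{MainTHMparametrizationEq1b} with no spurious normal component) \emph{and} the Codazzi equations implicit in the frame being parallel; concretely, differentiating \eqref{mean curvature partial differential} in $s$ must reproduce exactly that Gauss equation, and one should check this produces no sign or constant discrepancy. A secondary subtlety is confirming that the Frenet frame of $\alpha$ is globally defined on the relevant interval — i.e.\ that $\kappa=f\sqrt{1+c^2f}$ never vanishes, which follows from $f>0$ — so that $e_3,e_4$ are smooth; and that the rotational extension of the normal frame is well defined, which is immediate since the rotation group $SO(2)$ acting on $(x_1,x_2)$ commutes with the construction. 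Once these checks are in place the proof is complete.
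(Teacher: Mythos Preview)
Your proposal is correct and takes essentially the same approach as the paper: both build the adapted frame on the rotational surface from the Frenet frame of the profile curve $\alpha$, compute the shape operators and normal connection, and then invoke the sufficiency direction of Lemma~\ref{Biconservative surfaces T.1.}. The only cosmetic difference is that the paper first works in the rotationally extended Frenet frame $\{\hat e_3,\hat e_4\}$ (the rotated $N,B$ themselves), computes $A_{\hat e_3},A_{\hat e_4}$ and $H$ in that frame, and only then sets $e_3=H/\lVert H\rVert$ and rotates, whereas you write down $e_3,e_4$ directly via the inverse of \eqref{N(s)} and \eqref{B(s)}; also, your worry about the Codazzi equations is unnecessary---$M$ is an honest immersed surface, so Gauss--Codazzi--Ricci hold automatically, and \eqref{mean curvature  partial differential} is used only to reduce $t_1,n_1,b_1,\tau$ to closed expressions in $f$, exactly as the paper does in \eqref{t1n1b1All}.
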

\begin{proof}
Let $f$ be a positive function satisfying \eqref{mean curvature  partial differential} which is equivalent to
\begin{equation}\label{MCurvODE1}
f'(s)=\frac{4}{3} \epsilon  \sqrt{-c^2 f(s)^5+c_2^2 f(s)^{7/2}-9 f(s)^4}
\end{equation}
for a constant $\varepsilon\in\{-1,1\}$. Consider the curve $\alpha=(\alpha_1,\alpha_2,\alpha_3)$ with $\alpha_1=\frac{1}{c_2 f(s)^{3/4}}$, curvature $\kappa$ given by  \eqref{KTAUALLa} and torsion $\tau$. Let $t=(t_1,t_2,t_3)$, $n=(n_1,n_2,n_3)$, $b=(b_1,b_2,b_3)$ be the unit tangent, unit normal and unit binormal of $\alpha$. By a simple computation considering \eqref{MCurvODE1}, we obtain
\begin{subequations}\label{t1n1b1All}
\begin{eqnarray}
\label{t1n1b1Alla}t_1=\alpha_1'&=&-\frac{\epsilon  \sqrt{-c^2 f(s)^5+c_2^2 f(s)^{7/2}-9 f(s)^4}}{c_2 f(s)^{7/4}},\\
\label{t1n1b1Allb}n_1=\frac{t_1'}{\kappa}&=&\frac{f(s)^{1/4} \left(c^2 f(s)+3\right)}{c_2 \sqrt{c^2 f(s)+1}}.
\end{eqnarray}
Furthermore, $t_1^2+n_1^2+b_1^2=1$ and $\tau b=-n_1'-\kappa t$ give 
\begin{eqnarray}
\label{t1n1b1Allc}b_1&=&-\frac{2 c f(s)^{3/4}}{c_2 \sqrt{c^2 f(s)+1}},\\
\label{t1n1b1Alld}\tau&=&\frac{2 c \epsilon  \sqrt{c_2^2 f(s)^{5/2}-f(s)^3 \left(c^2 f(s)+9\right)}}{3 c^2 f(s)+3}.
\end{eqnarray}
\end{subequations}
By considering \eqref{MCurvODE1}, one can check that  \eqref{t1n1b1Alld}  is equivalent to \eqref{KTAUALLb} .

Now, let $M$ be the simple rotational surface in $\mathbb E^4$ given by
\eqref{MainTHMparametrizationSurfParam} with profile curve $\alpha$. Consider the local orthonormal frame field
$\{e_1,e_2,\hat e_3,\hat e_4\}$ on $M$ given by
\begin{eqnarray*}
e_1&=&\frac{\partial}{\partial s},\quad e_2=\frac{1}{\alpha(s)}\frac{\partial}{\partial t},\\
\hat e_3&=&\left( n_1(s)\cos t,n_1(s)\sin t,n_2(s),n_3(s)\right),\\
\hat e_4&=&\left( b_1(s)\cos t,b_1(s)\sin t,b_2(s),b_3(s)\right). 
\end{eqnarray*}

By a direct computation considering \eqref{t1n1b1Alla}-\eqref{t1n1b1Allc} and \eqref{KTAUALLb}, one can obtain \eqref{connectionsa}, \eqref{connectionsb} and
\begin{align}\label{PNMCVSurfAhate34}
\displaystyle
A_{\hat e_3}=\left(
\begin{array}{cc}
f(s) \sqrt{c^2 f(s)+1}&0\\
0&-\frac{f(s) \left(c^2 f(s)+3\right)}{\sqrt{c^2 f(s)+1}}
\end{array}
\right),\qquad A_{\hat e_4}=\left(
\begin{array}{cc}
0&0\\
0&\frac{2 c f(s)^{3/2}}{\sqrt{c^2 f(s)+1}}
\end{array}
\right).
\end{align}
Furthermore, the normal connection of $M$ satisfies
\begin{align}\label{PNMCVSurfDe2e3}
\begin{split}
\nabla^\perp_{e_2}{\hat e_3}=& 0,\\
\nabla^\perp_{e_1}{\hat e_3}=& \frac{2 c \epsilon  \sqrt{c_2^2 f(s)^{5/2}-f(s)^3 \left(c^2 f(s)+9\right)}}{3 c^2 f(s)+3}\hat e_4.
\end{split}
\end{align}
Note that the mean curvature vector of $M$ is 
$$H=-\frac{f(s)}{\sqrt{c^2 f(s)+1}}\hat e_3+ \frac{c f(s)^{3/2}}{\sqrt{c^2 f(s)+1}}\hat e_4$$
which yields that the mean curvature of $M$ is $H=f$. Thus, the normalized mean curvature vector of $M$ is
\begin{eqnarray}\label{PNMCVSurfNMCV}
e_3=-\frac{1}{\sqrt{c^2 f(s)+1}}\hat e_3+ \frac{c f(s)^{1/2}}{\sqrt{c^2 f(s)+1}}\hat e_4
\end{eqnarray}
and we put 
\begin{eqnarray}\label{PNMCVSurfNMCVe4}
e_4=\frac{c f(s)^{1/2}}{\sqrt{c^2 f(s)+1}}\hat e_3+ \frac{1}{\sqrt{c^2 f(s)+1}}\hat e_4
\end{eqnarray}
\eqref{PNMCVSurfAhate34}, \eqref{PNMCVSurfNMCV} and \eqref{PNMCVSurfNMCVe4} give \eqref{shape operator}. Furthermore, \eqref{connectionsc} follows from a direct computation considering \eqref{MCurvODE1}, \eqref{PNMCVSurfDe2e3} and \eqref{PNMCVSurfNMCV}. Thus, $M$ is a PNMCV surface and the orthonormal frame field $\{e_1,e_2;e_3,e_4\}$ satisfies conditions of Lemma \ref{Biconservative surfaces T.1.} which yields that $M$ is biconservative.
\end{proof}

In order to show the existence of PNMCV biconservative surfaces in $\mathbb E^4$, we would like to give the following example.
\begin{example}
Let $f$ be a positive function satisfying \eqref{mean curvature  partial differential} and assume that $f'$ does not vanish.
Then, the PNMCV simple rotational surface $M$  given by 
\begin{align}\label{MainTHMparametrizationSurfParamExample}
\begin{split}
x(s,t)=&\left(\frac{1}{c_2 f(s)^{3/4}}\cos t, \frac{1}{c_2 f(s)^{3/4}}\sin t,\frac1{c_2}\int_{s_0}^s{\cos (\theta (\xi)) f(\xi)^{1/4} \sqrt{c^2 f(\xi)+9} }d\xi \right.,\\&\left.
\frac1{c_2}\int_{s_0}^s \sin (\theta (\xi)) f(\xi)^{1/4}\sqrt{c^2 f(\xi)+9}  d\xi\right)
\end{split}
\end{align}
for a function $\theta$ satisfying
$$\theta '({s})=\frac{2 c c_2 f(s)^{5/4}}{c^2 f(s)+9}$$
is biconservative where $s_0,\; c\neq 0$ are some constants and $c_2$ is 
positive constant.
\end{example}

\section*{Acknowledgments}
This work was obtained during the project  supported by Research Fund of the Istanbul
Medeniyet University (Project Number: F-GAP-2017-986).

\end{document}